\newtheorem{thm}{Theorem}[section]
\newtheorem{lem}[thm]{Lemma}
\newtheorem{prop}[thm]{Proposition}
\theoremstyle{definition}
\newtheorem{defn}[thm]{Definition}
\theoremstyle{remark}
\newtheorem{rem}[thm]{Remark}
\newfont{\eufm}{eufm10}
\renewcommand{\P }{\mathcal P}
\renewcommand{\kappa }{\varkappa}
\renewcommand{\ll }{\langle\hspace{-.7mm}\langle }
\newcommand{\rr }{\rangle\hspace{-.7mm}\rangle }
\renewcommand{\P}{\mathcal P}
\newcommand*{\dist}[3][]{
	\ifthenelse{\equal{#1}{}}
		{\mathchoice%
			{d\!\left(#2,#3\right)}%
			{d(#2,#3)}%
			{d(#2,#3)}%
			{d(#2,#3)}%
		}
		{\mathchoice%
			{d_{#1}\!\left(#2,#3\right)}%
			{d_{#1}(#2,#3)}%
			{d_{#1}(#2,#3)}%
			{d_{#1}(#2,#3)}%
		}
}
\newcommand*{\distV}[1][]{
	\ifthenelse{\equal{#1}{}}
		{d}
		{d_{#1}}
}
\newcommand{\norm}[2][]{%
	\ifthenelse{\equal{#1}{}}%
	{%
		\mathchoice%
		{\left\| #2 \right\|}%
		{\|#2\|}%
		{\|#2\|}%
		{\|#2\|}%
	}%
	{%
		\mathchoice%
		{\left\| #2 \right\|_{#1}}%
		{\|#2\|_{#1}}%
		{\|#2\|_{#1}}%
		{\|#2\|_{#1}}%
	}%
}
\newcommand{\gro}[4][]{
	\ifthenelse{\equal{#1}{}}
	{\mathchoice%
		{\left\langle #2,#3\right\rangle_{#4}}%
		{\langle #2,#3\rangle_{#4}}%
		{\langle #2,#3\rangle_{#4}}%
		{\langle #2,#3\rangle_{#4}}%
	}
	{\mathchoice%
		{\left\langle #2,#3\right\rangle_{#4}^{#1}}%
		{\langle #2,#3\rangle_{#4}^{#1}}%
		{\langle #2,#3\rangle_{#4}^{#1}}%
		{\langle #2,#3\rangle_{#4}^{#1}}%
	}
}
\newcommand{\stab}[1]{%
	\mathchoice%
	{\operatorname{Stab}\left( #1\right)}%
	{\operatorname{Stab}( #1)}%
	{\operatorname{Stab}( #1)}%
	{\operatorname{Stab}( #1)}%
}
\newcommand{\inj}[2][]{
	\ifthenelse{\equal{#1}{}}
		{\operatorname{inj}\left(#2\right)}
		{\operatorname{inj}\left(#2,#1\right)}	
}
\newcommand{\set}[2]{%
	\mathchoice
	{\left\{#1\;\middle|\ #2\right\}}%
	{\{#1\;|\  #2\}}%
	{\{#1\;|\  #2\}}%
	{\{#1\;|\  #2\}}%
}
\newcommand{\diam}{\operatorname{diam}}
\newcommand{\card}[1]{
	\left| #1\right|
}
\newcommand{\lnormal}{%
	\langle\!\langle%
}
\newcommand{\rnormal}{%
	\rangle\!\rangle%
}
\newcommand{\normal}[1]{%
	\lnormal #1 \rnormal
}
\begin{document}

\title{A non-residually finite group acting uniformly properly on a hyperbolic space}
\author{R. Coulon\thanks{The first author acknowledges the support of the ANR grant DAGGER ANR-16-CE40-0006-01. He is also grateful to the \emph{Centre Henri Lebesgue} ANR-11-LABX-0020-01 for creating an attractive mathematical environment.}, D. Osin\thanks{The work of the second author has been supported by the NSF grant DMS-1612473.}}
\date{}

\maketitle

\begin{abstract}
	In this article we produce an example of a non-residually finite group which admits a uniformly proper action on a Gromov hyperbolic space.
\end{abstract}

%
\section{Introduction}
%

By default, all actions of groups on metric spaces considered in this paper are by isometries.
Recall that a group is \emph{hyperbolic} if and only if it acts properly and cocompactly on a hyperbolic metric space.
It is natural to ask what kind of groups we get if we remove the requirement of cocompactness from this definition. However, it turns out that every countable group admits a proper action on a hyperbolic space, namely the parabolic action on a combinatorial horoball \cite{Groves:2008ip}. Thus to obtain an interesting class of groups we have to strengthen our properness assumptions.

\medskip
In this paper we propose to study the class of groups that admit a uniformly proper action on a hyperbolic length space. We denote this class of group by $\P$. Recall that an action of a group $G$ on a metric space $X$ is \emph{uniformly proper} if for every $r\in \mathbb R_+$, there exists $N\in \mathbb N$, such that for all $x \in X$,
\begin{displaymath}
	\card{\set{g\in G}{\dist[X] x{gx}\leq r}}\leq N.
\end{displaymath}
Having a uniformly proper action on a hyperbolic space is a rather restrictive condition. 
For instance, \cite[Theorem 1.2]{Osi16} implies that every group $G\in \P$ (as well as every its subgroup) is either virtually cyclic or acylindrically hyperbolic, which imposes strong restrictions on the algebraic structure of $G$.

\medskip
Hyperbolic groups and their subgroups obviously belong to $\P$ and, in general, groups from the class $\P$ have many properties similar to those of hyperbolic groups. In fact, we do not know the answer to the following question: \emph{Does $\P$ coincide with the class of all subgroups of hyperbolic groups?} Although the affirmative answer seems unlikely, we are not aware of any obvious counterexamples.

\medskip

This paper is inspired by the well-known open problem of whether every hyperbolic group is residually finite. Our main result shows that the answer to this question is negative if one replaces the class of hyperbolic groups with the class $\P$.

\begin{thm}\label{main}
There exists a finitely generated non-trivial group $G$ acting uniformly properly on a hyperbolic graph of bounded valence such that every amenable quotient of $G$ is trivial. In particular, $G\in \mathcal P$ and $G$ is not residually finite.
\end{thm}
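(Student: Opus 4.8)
The plan is to build $G$ as a direct limit of hyperbolic groups obtained by iterated small cancellation quotients, following the general strategy of Dehn filling / small cancellation over hyperbolic (or relatively hyperbolic) groups as developed by Olshanskii and by Osin. We start with a suitable non-elementary hyperbolic group $G_0$ (for instance a free group, or better a group already lacking small normal subgroups) and successively kill elements so as to (a) eliminate all finite and amenable quotients and (b) control the action on a hyperbolic space uniformly along the whole sequence. Concretely, at stage $n$ we pass from $G_n$ to $G_{n+1} = G_n/\normal{R_n}$ where $R_n$ is a finite set of relators satisfying a small cancellation condition with very small parameter $\lambda$ over $G_n$, chosen so that: every proper quotient of $G_{n+1}$ that was already ``visible'' at stage $n$ collapses further, and the relators $R_n$ are taken from deep inside the group (words of length $\ell_n \to \infty$) so that balls of bounded radius in the Cayley graph of $G_{n+1}$ agree with those of $G_n$. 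The group $G$ is the limit; it is finitely generated and non-trivial because the small cancellation quotients are non-trivial and the generating set is fixed.

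The three properties to verify are: $G$ is non-trivial and finitely generated; every amenable quotient of $G$ is trivial; and $G$ acts uniformly properly on a hyperbolic graph of bounded valence. The first is routine from the construction. For the second, I would enumerate, in a bookkeeping fashion, all pairs $(g,h)$ of elements of the fixed generating alphabet's free group, together with certified ``non-commuting in an amenable way'' witnesses, and arrange at stage $n$ to impose a relation forcing the image of a chosen element into the normal closure of a proper subgroup's worth of relators; the cleanest route is to ensure that the limit group $G$ is \emph{common-quotient-minimal} in the sense that the only amenable quotient is trivial — this can be achieved by making every element of $G$ a product of a bounded number of conjugates of any other nontrivial element (a ``uniformly simple''-type condition), or more simply by arranging that $G$ has no nontrivial finite and no nontrivial $\mathbb Z$ quotient and is a limit of groups with property that every proper quotient is trivial in the limit. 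I would lean on the fact, in the style of \cite{Osi16}, that a suitable iterated small cancellation limit can be made to have no nontrivial amenable quotients because amenability passes to quotients and the only amenable subgroups surviving in the limit are virtually cyclic, which one then kills as well.

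The heart of the matter — and the main obstacle — is the \emph{uniform} properness of the limiting action on a \emph{fixed} hyperbolic graph of \emph{bounded valence}. Each $G_n$ acts on its Cayley graph $\Gamma_n$ with respect to the fixed generating set, which is hyperbolic with hyperbolicity constant $\delta_n$; the problem is that $\delta_n$ may blow up, and in any case the $\Gamma_n$ are different graphs. The device to handle this is the one behind graphical/small cancellation limits: take $\ell_n$ growing fast enough (relative to $\delta_n$ and $\lambda$) that the small cancellation condition guarantees the ball of radius $R_n \to \infty$ in $\Gamma_{n+1}$ is isometric to that in $\Gamma_n$, and that hyperbolicity is ``inherited on bounded scales''. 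One then constructs the limiting space $X$ not as $\lim \Gamma_n$ (which need not be hyperbolic) but as an appropriate hyperbolic graph assembled from the coherent system of balls — or, following Coulon's cone-off / rotating families machinery, as a direct limit of \emph{cone-offs} where the cone radius is sent to infinity, in which the action stays uniformly proper because the number of elements moving a point a bounded distance is controlled by the (bounded) local structure that stabilizes along the sequence. I expect the technical crux to be proving a uniform bound $N(r)$ independent of $n$ on $\card{\set{g \in G_n}{d_{\Gamma_n}(x,gx)\le r}}$: this reduces, via the small cancellation geometry, to the statement that a bounded-radius ball in $\Gamma_n$ stabilizes for large $n$, so the count is eventually that of the limit and hence finite and uniform. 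Bounded valence is then immediate since the generating set is fixed. Assembling these estimates, taking the limit, and invoking \cite[Theorem 1.2]{Osi16} for the ``in particular'' clause completes the argument.
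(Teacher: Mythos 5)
The central gap is the step you yourself flag as the ``heart of the matter'': producing one hyperbolic graph of bounded valence on which the \emph{limit} of infinitely many small cancellation quotients acts uniformly properly. This is not a technical detail that ``coherent systems of balls'' can patch; it is where the route fails. Local stabilization of balls in $\Gamma_n$ controls neither hyperbolicity of a limit space (the constants $\delta_n$ genuinely blow up, and the direct limit of the $\Gamma_n$, or of cone-offs with radii tending to infinity, need not be hyperbolic) nor uniform properness of the limit group on any single space. Worse, such infinite limits typically leave the class $\mathcal P$ altogether: a uniformly proper action on a hyperbolic length space is acylindrical, so by \cite[Theorem 1.2]{Osi16} an infinite group in $\mathcal P$ must contain a loxodromic, hence infinite-order, element; for instance infinite torsion limits of iterated small cancellation quotients cannot lie in $\mathcal P$, and nothing in your scheme rules out this kind of degeneration --- especially since your plan for eliminating amenable quotients (bounded products of conjugates, killing all virtually cyclic survivors, etc.) forces infinitely many stages. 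The amenable-quotient part is in any case only a statement of intent, with no concrete mechanism certifying that the limit has no nontrivial finite or abelian quotients.

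The paper avoids both problems by performing a \emph{single} small cancellation quotient and replacing the infinite bookkeeping by property (T) plus a conjugacy trick. Proposition~\ref{SCQ} shows that the class $\mathcal P_0$ is closed under killing suitable high powers of finitely many pairwise non-commensurable loxodromic elements: the quotient group acts on the quotient graph $\Gamma/K$, whose valence and vertex stabilizers are trivially controlled, and hyperbolicity of $\Gamma/K$ comes from the cone-off small cancellation theorem together with the quasi-isometry of Proposition~\ref{res: qi quotient spaces}. For the group itself, the Belegradek--Osin Rips construction \cite{BO} is applied to a common torsion-free hyperbolic quotient of a hyperbolic group with property (T) and an explicit $C'(1/6)$ group generated by conjugates of a single element $x$, yielding $1\to N\to H\to Q\to 1$ with $H$ torsion-free hyperbolic, $Q$ finitely presented infinite simple, and $N$ finitely generated, with (T), generated by conjugates of $a$ (the image of $x$). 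Lemma~\ref{res: rips}, using simplicity of $Q$, produces $b\in N$ non-commensurable with $a$ but conjugate to $a$ in every finite quotient of $N$. Then $G=N/\normal{a^p, b^q}$ for distinct sufficiently large primes lies in $\mathcal P_0$ by Proposition~\ref{SCQ}; any amenable quotient of $G$ is a quotient of the (T)-group $N$, hence finite, so the images of $a$ and $b$ are conjugate there, yet have coprime orders and are therefore trivial, and since $N$ is generated by conjugates of $a$ the quotient is trivial. To salvage your approach you would need a theorem placing infinite iterated limits in $\mathcal P$, and no such statement is available; the one-step reduction via property (T) is precisely what makes the argument close.
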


In the process of constructing such a group $G$, we show that a subclass of $\P$ is closed under taking certain small cancellation quotients (see Section~\ref{sec: sc}). This result seems to be of independent interest and can potentially be used to construct other interesting examples of groups from the class $\P$. 

The proof of the second claim of Theorem \ref{main} can be illustrated as follows. 
We first use a variant of the Rips construction suggested in \cite{BO} to construct a subgroup $N$ of a torsion-free hyperbolic group $H$ and two elements $a,b\in N$ which are ``sufficiently independent" in $N$ (more precisely, non-commensurable - see Section~\ref{sec: hyp geom} for the definition) but are conjugate in every finite quotient of $N$. 
The fact that these elements are ``sufficiently independent" together with the result about small cancellation quotients mentioned above imply that the quotient group $G=N/\ll a^p, b^q\rr$ belongs to $\P$ for some (in fact, all sufficiently large) primes $p$ and $q$. 
If $p\ne q$, the images of $a$ and $b$ are clearly trivial in every finite quotient of $G$. 
In particular, $G$ is not residually finite. A slightly more elaborated version of this idea involving Kazhdan's property (T) leads to the proof of the first claim of the theorem.

\paragraph{Acknowledgments.} We are grateful to Ashot Minasyan for useful comments and suggestions, which allowed us to simplify the original proof of Theorem \ref{thm-main}.

%
\section{A short review of hyperbolic geometry}
%
\label{sec: hyp geom}

In this section we recall a few notations and definitions regarding hyperbolic spaces in the sense of Gromov.
For more details, refer the reader to Gromov's original article \cite{Gro87} or \cite{CooDelPap90,Ghys:1990ki}.

\paragraph{The four point inequality.}
Let $(X,d)$ be a length space.
Recall that the \emph{Gromov product} of three points $x,y,z \in X$  is defined by
\begin{displaymath}
	\gro xyz = \frac 12 \left\{  \dist xz + \dist yz - \dist xy \right\}.
\end{displaymath}
In the remainder of this section, we assume that  $X$ is \emph{$\delta$-hyperbolic}, i.e. for every $x,y,z,t \in X$,
\begin{equation}
	\gro xzt \geq \min\left\{ \gro xyt, \gro yzt \right\} - \delta.
\end{equation}
We denote by $\partial X$ the boundary at infinity of $X$, see \cite[Chapitre 2]{CooDelPap90}.

\paragraph{Quasi-convex subsets.}
Let $Y$ be a subset of $X$.
Recall that $Y$ is \emph{$\alpha$-quasi-convex} if for every $x \in X$, for every $y,y' \in Y$, we have $d(x,Y) \leq \gro y{y'}x + \alpha$.
If $Y$ is path-connected, we denote by $\distV[Y]$ the length pseudo-metric on $Y$ induced by the restriction of $\distV[X]$ on $Y$.
The set $Y$ is \emph{strongly quasi-convex} if $Y$ is $2\delta$-quasi-convex and for every $y,y' \in Y$ we have
\begin{displaymath}
	\dist[X] y{y'} \leq \dist[Y] y{y'} \leq \dist[X] y{y'} + 8\delta.
\end{displaymath}
We denote by $Y^{+\alpha}$, the \emph{$\alpha$-neighborhood} of $Y$, i.e. the set of points $x \in X$ such that $d(x, Y) \leq \alpha$.

\paragraph{Group action.}
Let $G$ be a group acting uniformly properly on $X$.
An element $g \in G$ is either \emph{elliptic} (it has bounded orbits, hence finite order) or \emph{loxodromic} (it has exactly two accumulation points in $\partial X$) \cite[Lemma~2.2]{Bowditch:2008bj}.
A subgroup of $G$ is either \emph{elementary} (it is virtually cyclic) or contains a copy of the free group $\mathbb F_2$ \cite[Paragraph~8.2]{Gro87}.
In order to measure the action of $g$ on $X$, we use the translation length defined as follows
\begin{equation*}
	\norm[X] g = \inf_{x \in X} \dist {gx}x.
\end{equation*}
If there is no ambiguity, we omit the space $X$ in the notation.
A loxodromic element $g \in G$ fixes exactly two points $g^-$ and $g^+$ in $\partial X$.
We denote by $E(g)$ the stabilizer of $\left\{ g^-, g^+\right\}$.
It is the maximal elementary subgroup containing $g$.
Moreover $\langle g \rangle$ has finite index in $E(g)$ \cite[Lemma 6.5]{Dahmani:2017ef}.

\medskip
Given a loxodromic element $g \in G$, there exists a $g$-invariant strongly quasi-convex subset $Y_g$ of $X$ which is quasi-isometric to line; its stabilizer is $E(g)$ and the quotient $Y_g/E(g)$ is bounded \cite[Definition~3.12 and Lemma~3.13]{Coulon:2016if}.
We call this set $Y_g$ the \emph{cylinder} of $g$.

\medskip
We say that two elements $g,h \in G$ are \emph{commensurable}, if there exists $n,m \in \mathbb Z^*$ and $u \in G$ such that $g^n = uh^mu^{-1}$.
Every loxodromic element is contained in a unique maximal elementary subgroup \cite[Lemma~3.28]{Coulon:2016if}.
Hence two loxodromic elements $g$ and $h$ are commensurable if and only if there exits $u \in G$ such that $g$ and $uhu^{-1}$ generate an elementary subgroup.

\begin{lem}
\label{res: fellow travel conjugate}
	Let $S$ be a finite collection of pairwise non commensurable loxodromic elements of $G$.
	There exists $\Delta \in \mathbb R_+$ with the following property.
	For every $g,g' \in S$, for every $u \in G$, if
	\begin{displaymath}
		\diam \left(Y_g^{+5\delta} \cap uY_{g'}^{+5\delta}\right) > \Delta,
	\end{displaymath}
	then $g = g'$ and $u \in E(g)$.
\end{lem}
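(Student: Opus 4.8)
The plan is to argue by contradiction using a quantitative "two quasi-geodesics fellow-traveling for a long time must be essentially the same" principle, combined with the fact that the cylinders $Y_g$ are quasi-isometric to lines with cocompact $E(g)$-action. Suppose no such $\Delta$ exists. Then for some fixed $g, g' \in S$ (we may pass to a fixed pair since $S$ is finite) there is a sequence $u_n \in G$ and points $x_n, y_n \in Y_g^{+5\delta} \cap u_n Y_{g'}^{+5\delta}$ with $\dist[X]{x_n}{y_n} \to \infty$. The first step is to use the strong quasi-convexity of $Y_g$ and $u_nY_{g'}$ (each is $2\delta$-quasi-convex, and a $5\delta$-neighborhood of a quasi-convex set is quasi-convex with a controlled constant) to deduce that along a geodesic $[x_n, y_n]$ the sets $Y_g$ and $u_n Y_{g'}$ stay within uniformly bounded distance of each other on a sub-segment of length $\dist[X]{x_n}{y_n} - C$ for some constant $C = C(\delta)$. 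In other words, a long sub-cylinder of $Y_g$ is contained in a uniformly bounded neighborhood of $u_n Y_{g'}$, and vice versa.

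Next I would use uniform properness to replace $u_n$ by a better-behaved element. Since $Y_g / E(g)$ is bounded, say of diameter $D$, and likewise for $Y_{g'}$, I can translate: pick a basepoint $o \in Y_g$; for large $n$ the fellow-traveling sub-segment contains a point $p_n$ on $Y_g$ with $p_n$ far from both endpoints, and the corresponding point on $u_n Y_{g'}$ is within bounded distance $R$ of $p_n$. Using cocompactness of the $E(g)$-action on $Y_g$, choose $h_n \in E(g)$ so that $h_n p_n$ lies in a fixed bounded region around $o$; then $h_n u_n Y_{g'}$ passes within $R + D$ of $o$ and fellow-travels $Y_g = h_n Y_g$ for a distance tending to infinity. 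So, replacing $u_n$ by $h_n u_n$ (this does not affect membership of $u$ in $E(g)$, nor whether $g = g'$), we may assume all $u_n Y_{g'}$ pass through a fixed ball $\B(o, R')$ and fellow-travel $Y_g$ on arbitrarily long segments.

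Now the key point: because the action is uniformly proper, the set of elements $v \in G$ with $\dist[X]{o}{vo} \le R'$ is finite, and since $Y_{g'}/E(g')$ is bounded we may further adjust $u_n$ on the right by an element of $E(g')$ so that $u_n o'$ (for a fixed basepoint $o' \in Y_{g'}$) lands in $\B(o, R'')$; then $u_n$ ranges over a finite set. Passing to a subsequence, $u_n = u$ is constant. Thus $Y_g$ and $u Y_{g'} = Y_{ug'u^{-1}}$ fellow-travel on segments of unbounded length, hence their $5\delta$-neighborhoods have infinite-diameter intersection. Two cylinders of loxodromic elements with infinite-diameter coarse intersection have the same endpoint pair in $\partial X$ (a long fellow-traveling geodesic segment forces the limit points to coincide), so $\{g^-, g^+\} = \{(ug'u^{-1})^-, (ug'u^{-1})^+\}$, meaning $g$ and $ug'u^{-1}$ lie in a common elementary subgroup; by the uniqueness of the maximal elementary subgroup containing a loxodromic element this forces $ug'u^{-1} \in E(g)$, hence $g$ and $g'$ are commensurable, hence $g = g'$ by hypothesis on $S$, and then $u \in E(g)$ since $u$ normalizes, up to finite index, the stabilizer of $\{g^-, g^+\}$ — more directly, $ugu^{-1} \in E(g)$ together with $g \in E(g)$ being loxodromic forces $u E(g) u^{-1} = E(g)$ and then $u \in E(g)$ because $E(g)$ is its own normalizer (again by maximality). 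The main obstacle is the bookkeeping in the quasi-convex fellow-traveling estimate — making precise that "infinite-diameter neighborhood intersection" upgrades to "long sub-cylinders are Hausdorff-close" with constants depending only on $\delta$ — but this is standard thin-quadrilateral hyperbolic geometry and the constants are explicit.
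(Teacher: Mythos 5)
Your route is genuinely different from the paper's. The paper gets $\Delta$ in one stroke: uniform properness makes the action acylindrical, and it quotes a quantitative bound from Coulon's work (Proposition~3.44 and Lemma~6.14 of the cited article) saying that for two loxodromic elements $h,h'$ generating a non-elementary subgroup, $\diam\bigl(Y_h^{+5\delta}\cap Y_{h'}^{+5\delta}\bigr)\leq A\max\{\norm h,\norm{h'}\}+B$; taking $\Delta=A\max_{g\in S}\norm g+B$ finishes the argument, with an explicit constant. You instead run a qualitative compactness argument: no explicit $\Delta$, but a limiting sequence $u_n$, normalized by the cocompact $E(g)$- and $E(g')$-actions on the cylinders and pinned down by (uniform) properness so that after a subsequence $u_n$ is constant, reducing to the case of a single pair of cylinders with unbounded coarse intersection. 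That reduction is correct and the bookkeeping (left multiplication by $h_n\in E(g)$, right multiplication by $e_n\in E(g')$, neither affecting the dichotomy in the conclusion) is handled properly. The trade-off is that you lose the effective constant the paper gets for free, and you still have to supply, by hand, the same acylindricity-type input the paper imports from the literature.

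That is where the one real gap sits. From an unbounded intersection $Y_g^{+5\delta}\cap uY_{g'}^{+5\delta}$ you only get a sequence of points escaping to infinity in \emph{one} direction along the quasi-line $Y_g$, hence only \emph{one} common point of $\partial X$ between $\{g^\pm\}$ and $\{(ug'u^{-1})^\pm\}$. Your parenthetical claim that ``a long fellow-traveling geodesic segment forces the limit points to coincide'' is not a purely hyperbolic-geometry fact: two quasi-convex quasi-lines can share a single ray (think of two geodesics in a tree sharing a half-line), and for non-proper actions two loxodromic elements can indeed share exactly one fixed point without being commensurable. To upgrade one common endpoint to ``$g$ and $ug'u^{-1}$ generate an elementary subgroup'' you must use uniform properness again: for instance, if $g^+=(ug'u^{-1})^+$ but the fixed-point pairs differ, the elements $(ug'u^{-1})^{-k_n}g^n$, with $k_n$ chosen to match translation lengths, all move a basepoint on the shared ray by a bounded amount, so uniform properness forces two of them to coincide and hence a relation $g^{n-m}=(ug'u^{-1})^{k_n-k_m}$, i.e.\ commensurability — or simply cite the standard fact for acylindrical actions that loxodromics sharing a limit point have equal maximal elementary subgroups. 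With that step supplied (and your final maximality argument giving $uE(g)u^{-1}=E(g)$, hence $u\in E(g)$, is fine), the proof goes through; as written, the crucial commensurability deduction rests on an unjustified assertion.
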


\begin{proof}
	The action of $G$ on $X$ being uniformly proper, it is also acylindrical.
	According to \cite[Proposition~3.44 and Lemma~6.14]{Coulon:2016if} there exists a constant $A,B >0$ with the following property:
	if $h,h' \in G$ are two loxodromic elements generating a non-elementary subgroup, then
	\begin{displaymath}
		\diam \left(Y_h^{+5\delta} \cap Y_{h'}^{+5\delta}\right) \leq A \max\{ \norm h, \norm {h'}\} + B.
	\end{displaymath}
	We now let
	\begin{displaymath}
		\Delta = A\max_{g \in S} \norm g + B.
	\end{displaymath}
	Let $g,g' \in S$, and $u \in G$ such that
	\begin{displaymath}
		\diam \left(Y_g^{+5\delta} \cap uY_{g'}^{+5\delta}\right) > \Delta.
	\end{displaymath}
	Recall that $uY_{g'}$ is the cylinder of $ug'u^{-1}$.
	It follows from our choice of $\Delta$, that $g$ and $ug'u^{-1}$ generate an elementary subgroup.
	Since the elements of $S$ are pairwise non-commensurable it forces $g = g'$ and $u \in E(g)$.
\end{proof}

%
\section{An auxiliary class $\P_0$}
%

To prove our main result we will make use of an auxiliary class $\mathcal P_0$.

\begin{defn}
\label{def: bounded geometry}
	A subset $S$ of a metric space $X$ is \emph{$r$-separated} if for every distinct points $s,s' \in S$, $\dist s{s'} \geq r$.
	Given a subset $Y$ of $X$ and $r >0$, we define the \emph{$r$-capacity} of $Y$, denoted by $C_r(Y)$, as the maximal number of points in an $r$-separated subset of $Y$.
	We say that $X$ has \emph{$r$-bounded geometry} if for every $R>0$, there is  an integer $N$ bounding from above  the $r$-capacity of every ball of radius $R$.
	If there exists $r>0$ such that $X$ has $r$-bounded geometry we simply say that $X$ has \emph{bounded geometry}.
\end{defn}

The class $\P_0$ we are interested in consists of all groups admitting a uniformly proper action on a hyperbolic length space with bounded geometry.
It is clear that $\P_0\subseteq \P$.
We will show that the class $\P_0$ is closed under certain small cancellation quotients.
Before we discuss the precise statements and proofs, a few remarks are in order.
First, we do not know whether $\P_0$ is indeed a proper subclass of $\P$.
Second, it is possible to prove the results of the next section for the whole class $\P$.
Nevertheless, the proofs become much easier for $\P_0$.
Therefore we restrict our attention to this subclass, which is sufficient for the proof of our main theorem.

\medskip
We start with a few equivalent characterizations of the class $\mathcal P_0$.
A graph  $\Gamma = (V,E)$ is understood here in the sense of Serre \cite{Serre:1977wy}.
Observe that a graph $\Gamma$ has bounded geometry whenever it has \emph{uniformly bounded valence} i.e. there exists $d \in \mathbb N$, such that the valence of any vertex $v \in V$ is at most $d$.

\begin{rem}
The converse statement is false. Indeed, consider the real line, which we think of as a graph with the vertex set $\mathbb Z$ and the obvious edges; to each vertex, attach infinitely many edges of length $1$. The resulting graph has $3$-bounded geometry while some vertices have infinite valence.
\end{rem}

If $\Gamma$ is a graph with uniformly bounded valence, the action of a group $G$ on $\Gamma$ is uniformly proper if and only if there exists $N \in \mathbb N$ such that the stabilizer of any vertex contains at most $N$ elements.

\begin{prop}
\label{res: characterization P0}
	Let $G$ be a group.
	The following assertions are equivalent.
	\begin{enumerate}
		\item \label{enu: characterization P0 - def}
		$G$ belongs to $\mathcal P_0$.
		\item  \label{enu: characterization P0 - action w/o inversion}
		$G$ acts uniformly properly without inversion on a hyperbolic graph $\Gamma$ with uniformly bounded valence.
		\item \label{enu: characterization P0 - free action}
		$G$ acts on a hyperbolic graph $\Gamma$ with uniformly bounded valence such that the action of $G$ is free when restricted to the vertex set of $\Gamma$.
		
	\end{enumerate}
\end{prop}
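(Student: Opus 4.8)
The plan is to prove the cycle of implications $\ref{enu: characterization P0 - def} \Rightarrow \ref{enu: characterization P0 - action w/o inversion} \Rightarrow \ref{enu: characterization P0 - free action} \Rightarrow \ref{enu: characterization P0 - def}$, so that only three one-directional arguments are needed. The implication $\ref{enu: characterization P0 - free action} \Rightarrow \ref{enu: characterization P0 - def}$ is essentially immediate: a graph of uniformly bounded valence has bounded geometry (as observed just before the statement), a hyperbolic graph is in particular a hyperbolic length space (we view edges as unit intervals), and an action that is free on the vertex set is automatically uniformly proper — the stabilizer of every vertex is trivial, and one invokes the remark preceding the proposition (or simply notes that a ball of radius $r$ contains boundedly many vertices, and each group element moving a fixed vertex a bounded distance is determined by the vertex it is sent to). For $\ref{enu: characterization P0 - action w/o inversion} \Rightarrow \ref{enu: characterization P0 - free action}$, the idea is the standard barycentric-subdivision trick to kill inversions, followed by passing to a larger graph on which the action becomes free on vertices: given the action on $\Gamma$ without inversion with vertex stabilizers of size at most $N$, one builds a new graph $\Gamma'$ by attaching to each vertex $v$ a ``decoration'' (e.g. a complete graph, or a star, on the finitely many elements of $\stab v$, or more robustly a copy of the finite set $\stab v$ joined to $v$) in a $G$-equivariant way; since all stabilizers have size $\le N$ this keeps the valence uniformly bounded, it does not affect hyperbolicity (we are attaching uniformly bounded pieces), and it separates the orbit of $v$ into free orbits. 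One must be slightly careful to make the construction canonical/equivariant, which is where most of the routine bookkeeping lies.

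The implication $\ref{enu: characterization P0 - def} \Rightarrow \ref{enu: characterization P0 - action w/o inversion}$ is the substantive one and I expect it to be the main obstacle. Here we start with a uniformly proper action of $G$ on a hyperbolic length space $X$ with $r$-bounded geometry, and we must manufacture a \emph{graph} of uniformly bounded valence carrying a uniformly proper $G$-action, while preserving hyperbolicity. The natural approach is a Rips-complex / Vietoris–Rips type construction at an appropriate scale: fix a maximal $r$-separated subset $Z$ of $X$, which is $G$-coarsely-dense and on which $G$ acts — but not necessarily invariantly, so one should instead take the $G$-orbit of such a net, or better, a maximal $r$-separated subset chosen $G$-equivariantly (possible after enlarging $r$ slightly, or by a Zorn's-lemma argument applied to $G$-invariant separated sets, or simply by taking the full orbit of a net and then thinning). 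Then form the graph $\Gamma$ with vertex set $Z$ and an edge between $z, z'$ whenever $\dist z{z'} \le K$ for a constant $K$ to be chosen ($K = 100\delta + r$ or so). Bounded geometry of $X$ guarantees that each vertex has uniformly bounded valence: a ball of radius $K$ contains at most $N(K)$ points of the $r$-separated set $Z$. Quasi-convexity/hyperbolicity of the Rips graph at a large enough scale relative to $\delta$ is the classical fact that such graphs are quasi-isometric to $X$ and hence hyperbolic — one should cite or reprove the standard statement that for $K$ large compared to $\delta$, the Rips graph of a $\delta$-hyperbolic space is hyperbolic and quasi-isometric to it. Finally, uniform properness transfers under a $G$-equivariant quasi-isometry, so the action of $G$ on $\Gamma$ is uniformly proper; subdividing edges makes it a genuine simplicial action without inversion, and one checks the valence bound survives subdivision.

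The one genuine technical point to be careful about throughout is \emph{equivariance of the net}: a maximal $r$-separated subset need not be $G$-invariant, so one either (i) takes $Z$ to be a $G$-invariant $r$-separated set that is maximal among such (exists by Zorn, and is automatically coarsely dense because maximality among $G$-invariant sets still forces every point to be within $r$ of $Z$... — actually one must check this, and it does hold: if $x$ were at distance $> r$ from $Z$, the orbit $Gx$ could be thinned to an $r$-separated $G$-invariant set disjoint from the $r$-neighborhood of $Z$ only if orbit points are mutually far, which may fail, so the cleaner route is), or (ii) observe that the action is cobounded onto a net \emph{after} the Rips construction is taken on the orbit $G\cdot Z_0$ of an ordinary net $Z_0$ and edges are added at scale $K$: the orbit $G\cdot Z_0$ is $G$-invariant and $r$-coarsely-dense, and although it need not be $r$-separated, the Rips graph at scale $K \gg r$ on this (possibly non-separated, but still locally finite by bounded geometry) set is still locally uniformly finite and quasi-isometric to $X$, which is all we need. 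I would adopt option (ii) in the write-up as it sidesteps the separation subtlety entirely while keeping the valence bound from $r$-bounded geometry.
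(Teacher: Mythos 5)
There is a genuine gap, and it sits exactly at the step you identified as the main obstacle, namely (\ref{enu: characterization P0 - def}) $\Rightarrow$ (\ref{enu: characterization P0 - action w/o inversion}). Your fallback option (ii) does not work: the $G$-orbit of a maximal $r$-separated net $Z_0$ need \emph{not} be locally uniformly finite, because $r$-bounded geometry only bounds the cardinality of $r$-\emph{separated} subsets of a ball, and $G\cdot Z_0$ is not separated. Concretely, let $G=\mathbb Z$ act on $X=\mathbb R$ by integer translations and let $Z_0\supseteq\{10n+\alpha_n\}$ where $\{\alpha_n\}$ is dense in $[0,1/2]$; then $G\cdot Z_0$ has infinitely many points in $[0,1]$, so the Rips graph on $G\cdot Z_0$ has vertices of infinite valence. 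Your option (i) fails as well, as you half-suspected: a nonempty $G$-invariant $r$-separated set may simply not exist (translation by $r/2$ on $\mathbb R$ gives a uniformly proper action for which every orbit contains points at distance $r/2$). The paper's resolution is the missing idea: choose a maximal $r$-separated subset $\bar S$ of the \emph{quotient} $\bar X=X/G$ and pull it back to a $G$-invariant set $S\subseteq X$. Then points of $S$ lying in \emph{distinct} $G$-orbits are automatically $r$-separated, so the number of orbits meeting a ball of radius $R$ is bounded by the $r$-capacity $N_1$, while the number of points of a \emph{single} orbit in that ball is bounded by the uniform properness constant $N_2$; the vertex set is taken to be $G\times S_0$ (with $S_0$ a set of orbit representatives and edges defined by $\dist[X]{us}{u's'}\leq R$), giving valence at most $N_1N_2$ and, at the same time, a free action on vertices — so the paper proves (\ref{enu: characterization P0 - def}) $\Rightarrow$ (\ref{enu: characterization P0 - free action}) in one stroke.

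Your step (\ref{enu: characterization P0 - action w/o inversion}) $\Rightarrow$ (\ref{enu: characterization P0 - free action}) also has a flaw as written: attaching to each vertex $v$ a decoration indexed by $\stab v$ while keeping $v$ does not make the action free on vertices, since $v$ itself retains its stabilizer; moreover the only canonical equivariant action on such decorations (conjugation, identifying $\stab{gv}=g\stab v g^{-1}$) fixes the point indexed by $1$. To repair it you would have to replace each vertex orbit $G/\stab v$ by a free orbit $G$ with metrically defined edges — but that is precisely the construction above, which makes a separate argument for this implication unnecessary. Your remaining implication (\ref{enu: characterization P0 - free action}) $\Rightarrow$ (\ref{enu: characterization P0 - def}) is fine (the paper instead routes it as (\ref{enu: characterization P0 - free action}) $\Rightarrow$ (\ref{enu: characterization P0 - action w/o inversion}) by barycentric subdivision, with (\ref{enu: characterization P0 - action w/o inversion}) $\Rightarrow$ (\ref{enu: characterization P0 - def}) immediate), but as it stands both constructive arrows of your cycle are incomplete, and the quotient-net trick plus the $G\times S_0$ vertex set is the content you would need to supply.
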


\begin{proof}
	To show that (\ref{enu: characterization P0 - free action}) $\Rightarrow$ (\ref{enu: characterization P0 - action w/o inversion}) one simply takes the barycentric subdivision of the graph.
	The implication (\ref{enu: characterization P0 - action w/o inversion}) $\Rightarrow$ (\ref{enu: characterization P0 - def}) directly follows from the definition.
	We now focus on (\ref{enu: characterization P0 - def}) $\Rightarrow$ (\ref{enu: characterization P0 - free action}).
	By definition there exists $r \in \mathbb R_+^*$ such that $G$ acts uniformly properly on a hyperbolic length space $X$ with $r$-bounded geometry.
	Using Zorn's Lemma we choose an $r$-separated subset $\bar S$ of $\bar X = X/G$ which is maximal for this property.
	We denote by $S$ the pre-image of $\bar S$ in $X$.
	We fix $S_0 \subset S$ to be a set of representatives for the action of $G$ on $S$.
 	Let $R = 2r + 1$.
	We now define a graph $\Gamma = (V,E)$ as follows.
	Its vertex set is $V = G \times S_0$.
	The edge set $E$ is the set of pairs $((u,s),(u',s')) \in V \times V$ such that $\dist[X]{us}{u's'} \leq R$.
	The initial and terminal vertices of such an edge are $(u,s)$ and $(u',s')$ respectively.
	The group $G$ acts freely on $V$ as follows: for every $g \in G$, for every $(u,s) \in V$, we have $g \cdot (u,s) = (gu,s)$.
	This action induces an action by isometries of $G$ on $\Gamma$.
	Recall that $R > 2r$.
	This allows us to perform a variation on the Milnor-Svar\v c Lemma and prove that the map $V \to X$ sending $(u,s)$ to $us$ induces a ($G$-equivariant) quasi-isometry from $\Gamma$ to $X$.
	In particular $\Gamma$ is hyperbolic.
	We are left to prove that $\Gamma$ has uniformly bounded valence.
	
	\medskip
	Since $X$ has $r$-bounded geometry, there exists $N_1 \in \mathbb N$ such that the $r$-capacity of any ball of radius $R$ in $X$ is at most $N_1$.
	The group $G$ acting uniformly properly on $X$, there exists $N_2 \in \mathbb N$ such that for every $x \in X$, the cardinality of the set
	\begin{displaymath}
		U(x) = \left\{ g \in G \mid \dist[X] x{gx}\leq 2R \right\}
	\end{displaymath}
	is bounded above by $N_2$.
	We now fix a vertex $v_0 = (u_0,s_0)$ of $\Gamma$.
	We fix a subset $S_1$ of $B(u_0s_0,R)$ such that any $G$-orbit of $S$ intersecting $B(u_0s_0,R)$ contains exactly one point in $S_1$.
	It follows from our choice of $S$ that if $s, s' \in S$ belong to distinct $G$-orbits, then $\dist[X]s{s'} \geq r$.
	Consequently the cardinality of $S_1$ is bounded above by the $r$-capacity of this ball, i.e. $N_1$.
	By construction for every $s \in S_1$, there exists $u_s \in G$ such that $u_ss$ belongs to $S_0$.
	It follows from the definition of $\Gamma$ combined with the triangle inequality that any neighbor of $v_0$ belongs to the set
	\begin{displaymath}
		\left\{ (uu_s^{-1},u_ss) \mid s \in S_1, u \in U(s) \right\}.
	\end{displaymath}
	The cardinality of this set is bounded above by $d = N_1N_2$, which does not depend on $v_0$, hence $\Gamma$ has uniformly bounded valence.
	\end{proof}

%
\section{Stability of the class $\P_0$.}
%
\label{sec: sc}

We now explain how $\mathcal P_0$ behaves under small cancellation.
To that end we first review the geometric theory of small cancellation as it has been introduced by M.~Gromov \cite{Gro01b} and further developed in \cite{DelGro08,Coulon:il,Dahmani:2017ef}.
For a detailed exposition we refer the reader to \cite[Sections~4-6]{Coulon:2014fr}.

\paragraph{Settings.}
Let $X$ be a $\delta$-hyperbolic length space and $G$ a group acting on $X$.
Let $\mathcal Q$ be a family of pairs $(H,Y)$ such that $Y$ is a strongly quasi-convex subset of $X$ and $H$ a subgroup of $\stab Y$.
We assume that $\mathcal Q$ is closed under the following action of $G$:
for every $(H,Y) \in \mathcal Q$, for every $g \in G$, $g(H,Y) = (gHg^{-1},gY)$.
In addition we require that $\mathcal Q/G$ is finite.
We denote by $K$ the (normal) subgroup generated by the subgroups $H$ where $(H,Y) \in \mathcal Q$.
The goal is to study the quotient $\bar G = G/K$ and the corresponding projection $\pi \colon G \to \bar G$.
To that end we define the following two small cancellation parameters
\begin{eqnarray*}
	\Delta (\mathcal Q,X) & = & \sup \set{\diam\left(Y_1^{+5\delta}\cap Y_2^{+5\delta}\right)}{ (H_1,Y_1) \neq (H_2,Y_2) \in \mathcal Q}, \\
	\inj[X]{\mathcal Q} & = & \inf \set{\norm h}{h \in H\setminus\{1\},\; (H,Y) \in \mathcal Q}.
\end{eqnarray*}
They play the role of the length of the longest piece and the shortest relation respectively.
We now fix a number $\rho>0$.
Its value will be made precise later.
It should be thought of as a very large parameter.

\paragraph{Cones.}
Let $(H,Y) \in \mathcal Q$.
The \emph{cone of radius $\rho$ over $Y$}, denoted by $Z(Y)$, is the quotient of $Y\times [0,\rho]$ by the equivalence relation that identifies all the points of the form $(y,0)$.
The equivalence class of $(y,0)$, denoted by $a$, is called the \emph{apex} of the cone.
By abuse of notation, we still write $(y,r)$ for the equivalence class of $(y,r)$.
The map $\iota \colon Y \rightarrow Z(Y)$ that send $y$ to $(y,\rho)$ provides a natural embedding form $Y$ to $Z(Y)$.
This space can be endowed with a metric as described below.
For the geometric interpretation of the distance see \cite[Section~4.1]{Coulon:2014fr}.
	
\begin{prop}{\rm \cite[Chapter I.5, Proposition 5.9]{BriHae99}} \quad
\label{res: def distance cone}
	The cone $Z(Y)$ is endowed with a metric characterized in the following way.
	Let $x=(y,r)$ and $x'=(y',r')$ be two points of $Z(Y)$ then
	\begin{displaymath}
		\cosh \dist[Z(Y)] x{x'} = \cosh r \cosh r' - \sinh r\sinh r' \cos \theta(y,y'),
	\end{displaymath}
	where $\theta(y,y')$ is the \emph{angle at the apex} defined by $\theta(y,y') = \min \left\{ \pi , {\dist[Y]y{y'}}/\sinh \rho\right\}$.
\end{prop}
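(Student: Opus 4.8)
The plan is to obtain the proposition directly from the general $\kappa$-cone construction of Bridson and Haefliger \cite[Chapter~I.5]{BriHae99}, specialized to $\kappa=-1$ and applied to a rescaled copy of $Y$. Since $Y$ is strongly quasi-convex, $\distV[Y]$ is an honest metric on $Y$ (recall $\distV[X]\leq\distV[Y]$), and we may form the metric space $Y'=\bigl(Y,\ \distV[Y]/\sinh\rho\bigr)$. To any metric space, \cite{BriHae99} associates its hyperbolic cone $C_{-1}Y'$: as a set it is $[0,\infty)\times Y'$ with all pairs of the form $(0,y)$ collapsed to a single point $a$, the apex; and it carries a distance characterized by
\begin{displaymath}
	\cosh\dist{(r,y)}{(r',y')}=\cosh r\cosh r'-\sinh r\sinh r'\cos\Bigl(\min\bigl\{\pi,\ \distV[Y](y,y')/\sinh\rho\bigr\}\Bigr).
\end{displaymath}
That this formula genuinely defines a metric is exactly \cite[Chapter~I.5, Proposition~5.9]{BriHae99}. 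The argument of the cosine above is precisely the angle $\theta(y,y')$ appearing in the statement, so the right-hand side coincides with $\cosh r\cosh r'-\sinh r\sinh r'\cos\theta(y,y')$.

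It then remains to identify $Z(Y)$ with a subspace of $C_{-1}Y'$ carrying this formula as its ambient metric. Applying the displayed formula with the apex gives $\dist{a}{(r,y)}=r$, so the closed ball of radius $\rho$ about $a$ is, as a set, $[0,\rho]\times Y$ modulo the identification of the points $(0,y)$; up to swapping the order of the two coordinates this is $Y\times[0,\rho]$ modulo the identification of the points $(y,0)$, i.e. the cone $Z(Y)$ of the statement, with its apex corresponding to $a$ and with $\iota(y)=(y,\rho)$ corresponding to the point of radius $\rho$ in the direction $y$. The restriction of a metric to a subset is again a metric, and, by the very definition of the distance on $C_{-1}Y'$, this restriction is, when evaluated at points of $Z(Y)$, given by the displayed formula. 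Hence that formula endows $Z(Y)$ with a metric, which is the assertion.

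The proof is therefore essentially a citation, and the only point requiring care is bookkeeping: checking that the scaling factor $1/\sinh\rho$ and the truncation of the angular term at $\pi$ agree with the conventions of \cite[Chapter~I.5]{BriHae99} for $\kappa=-1$, and that our ``cone of radius $\rho$'' corresponds on the Bridson--Haefliger side to the metric ball of radius $\rho$ centred at the apex of the (infinite) hyperbolic cone; no genuine difficulty arises. For orientation it is worth recording where the normalization $\sinh\rho$ comes from: for $y,y'\in Y$ one computes $\cosh\dist{\iota(y)}{\iota(y')}=1+\sinh^2\rho\,\bigl(1-\cos\theta(y,y')\bigr)$, so $\dist{\iota(y)}{\iota(y')}\approx\sinh\rho\cdot\theta(y,y')=\distV[Y](y,y')$ when $y$ and $y'$ are close in $Y$; thus $\iota$ is a quasi-isometric embedding of $Y$ into $Z(Y)$, which is the reason for rescaling the metric on $Y$ before coning off. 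The finer geometric interpretation of the distance, used later in Section~\ref{sec: sc}, is the one spelled out in \cite[Section~4.1]{Coulon:2014fr} and is irrelevant to the bare fact that the formula is a metric.
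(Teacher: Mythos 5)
Your argument is correct and is essentially what the paper does: the proposition is stated there as a pure citation of \cite[Chapter~I.5, Proposition~5.9]{BriHae99}, and your proof just makes the citation precise by applying the $\kappa=-1$ cone construction to $(Y,\distV[Y]/\sinh\rho)$ and restricting to the closed ball of radius $\rho$ about the apex. The bookkeeping you carry out (the $\sinh\rho$ rescaling producing the angle $\theta$, the truncation at $\pi$, and the identification of $Z(Y)$ with that ball, under which the restricted distance is given by the displayed formula) is accurate.
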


\paragraph{Cone-off over a metric space.}
The \textit{cone-off of radius $\rho$ over $X$ relative to $\mathcal Y$} denoted by $\dot X_\rho(\mathcal Q)$ (or simply $\dot X$) is obtained by attaching for every $(H,Y) \in \mathcal Q$, the cone $Z(Y)$ on $X$ along $Y$ according to $\iota$.
We endow $\dot X$ with the largest pseudo-metric $\distV[\dot X]$ for which all the maps $X \to \dot X$ and $Z(Y) \to \dot X$ -- when $(H,Y)$ runs over $\mathcal Q$ -- are $1$-Lipschitz.
It turns out that this pseudo-distance is actually a distance on $\dot X$ \cite[Proposition 5.10]{Coulon:2014fr}.
Actually $(\dot X, \distV[\dot X])$ is a length space.

\medskip
The action of $G$ on $X$ naturally extends to an action by isometries on $\dot X$ as follows.
Let $(H,Y) \in \mathcal Q$.
For every $x = (y,r) \in Z(Y)$, for every $g \in G$, $gx$ is the point of $Z(gY)$ defined by $gx = (gy,r)$.
The space $\bar X_\rho(\mathcal Q)$ (or simply $\bar X$) is the quotient $\bar X = \dot X/K$.
The metric on $\dot X$ induces a pseudo-metric on $\bar X$.
We write $\zeta \colon \dot X \rightarrow \bar X$ for the canonical projection from $\dot X$ to $\bar X$.
The quotient $\bar G$ naturally acts by isometries on $\bar X$.

\begin{prop}
\label{res: qi quotient spaces}
	Assume that for every $(H,Y) \in \mathcal Q$, the space $Y/H$ is bounded.
	Then the spaces $\bar X$ and $X/K$ are quasi-isometric.
\end{prop}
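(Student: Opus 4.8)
The plan is to compare $\bar X = \dot X/K$ with $X/K$ via the natural map induced by the inclusion $X \hookrightarrow \dot X$, and to show that this map is a quasi-isometry. Both spaces carry the quotient length pseudo-metrics, so it suffices to control distances in $\dot X$ between points of $X$ (equivalently, between their images in $X/K$) in terms of distances in $X$. The map $X/K \to \bar X$ is surjective up to bounded error: any point of $\bar X$ is the image of a point of $\dot X$, which lies either on $X$ or on some cone $Z(Y)$, and in the latter case it is within distance $\rho$ of the apex, which in turn is within distance $\rho$ of $\iota(Y) \subseteq X$; hence every point of $\bar X$ is within $2\rho$ of the image of $X/K$. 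It remains to establish the two-sided metric estimate.

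The one easy inequality is that $\dist[\bar X]{\zeta(x)}{\zeta(x')} \leq \dist[X/K]{}{}$ (with obvious notation) for $x,x'\in X$, since the projection $X \to \dot X$ is $1$-Lipschitz and passing to quotients is $1$-Lipschitz. For the reverse inequality, I would analyze a geodesic (or near-geodesic path) in $\dot X$ joining two lifts of points of $X/K$. Such a path decomposes into subpaths lying in $X$ and subpaths that enter and exit cones. A subpath entering a cone $Z(Y)$ at a point $p \in \iota(Y)$ and leaving at $q \in \iota(Y)$ realizes, in $\dot X$, a ``shortcut'' of length at most $2\rho$; but in $X$ the points $p,q$ — being in the same translate $gY$ of some $Y$ with $gHg^{-1} \leq \stab{gY}$ — can be joined by a path in $gY$ whose length, after projecting to $X/K$, is bounded by $\diam(Y/H)$, which is finite by hypothesis and (since $\mathcal Q/G$ is finite) bounded by a uniform constant $D$ independent of the translate. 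So each cone-excursion costs at most $2\rho$ in $\dot X$ but can be replaced, at the level of $X/K$, by a detour of length at most $D$. One must also bound the \emph{number} of cone-excursions along a geodesic of $\dot X$; this is where one invokes the structure theory of the cone-off (the references \cite{Coulon:2014fr}, and the fact that $\dot X$ is hyperbolic for $\rho$ large): essentially, consecutive excursions are separated by genuine progress in $X$, so a path of length $L$ in $\dot X$ makes at most $O(L/\rho)$ excursions. Combining, a path of length $L$ in $\dot X$ between points of $X$ yields a path in $X/K$ of length at most $C L$ for a constant $C = C(\rho, D)$, giving $\dist[X/K]{}{} \leq C\,\dist[\bar X]{}{} + C'$.

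Assembling these bounds shows the induced map $X/K \to \bar X$ is a quasi-isometry: it is $C$-Lipschitz and $C'$-quasi-surjective, and admits a quasi-inverse with the same constants. The main obstacle is the bookkeeping in the reverse inequality — specifically, controlling how many times and how deeply a $\dot X$-geodesic penetrates the cones, and ensuring the replacement detours in the \emph{quotient} $X/K$ (not in $X$ itself, where $p$ and $q$ need only be in a common $H$-translate of a fundamental domain for $Y/H$, not close) are uniformly bounded. This is precisely where the hypothesis that $Y/H$ is bounded for every $(H,Y)\in\mathcal Q$ is used, together with the finiteness of $\mathcal Q/G$ to make the bound uniform; the relevant control on cone penetration for $\dot X$-geodesics is available from \cite[Sections~4--6]{Coulon:2014fr}.
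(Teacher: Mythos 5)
Your overall strategy coincides with the paper's (compare $X/K$ with $\bar X$ via the $1$-Lipschitz map induced by $X\hookrightarrow\dot X$, note $2\rho$-quasi-surjectivity, prove a reverse Lipschitz bound), but the reverse bound as you set it up has a genuine gap. You replace \emph{every} cone-excursion by a detour of length at most $D$ in $X/K$ and therefore need the number of excursions along a path of length $L$ in $\dot X$ to be $O(L/\rho)$. The justification offered --- ``consecutive excursions are separated by genuine progress in $X$'', plus hyperbolicity of $\dot X$ and cone-penetration control from the references --- is not available here: this proposition is stated and used \emph{without} any small cancellation hypotheses (no bounds on $\delta$, $\Delta(\mathcal Q,X)$ or $\inj[X]{\mathcal Q}$ are assumed), so hyperbolicity of $\dot X$ cannot be invoked; and in any case an approximating chain in the length space $\dot X$ may contain arbitrarily many arbitrarily short cone-segments with negligible travel in $X$ in between, so no bound of the form $O(L/\rho)$ on the number of excursions holds. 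With a flat charge of $D$ per excursion, such paths make the total detour in $X/K$ unbounded relative to $L$, and the argument breaks precisely at the step you flag as the main obstacle.

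The paper closes this by never counting excursions: each cone-segment is estimated \emph{multiplicatively} against its own $\dot X$-length, via a dichotomy coming from the explicit cone metric. Take a chain $(x_0,y_0,x_1,y_1,\dots,x_m,y_m)$ approximating $\dist[\dot X]x{x'}$, alternating segments in $X$ and segments with $y_i,x_{i+1}\in Y_i$ measured in $Z(Y_i)$. If $\dist[Y_i]{y_i}{x_{i+1}}\leq\pi\sinh\rho$ (shallow excursion), Proposition~\ref{res: def distance cone} gives $\dist[Y_i]{y_i}{x_{i+1}}\leq\frac{\pi\sinh\rho}{2\rho}\dist[Z(Y_i)]{y_i}{x_{i+1}}$, so the detour in $X$ (hence in $X/K$) is proportional to the $\dot X$-length of that segment --- no boundedness hypothesis is needed and arbitrarily many such excursions are harmless. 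If $\dist[Y_i]{y_i}{x_{i+1}}>\pi\sinh\rho$ (deep excursion), then $\dist[Z(Y_i)]{y_i}{x_{i+1}}=2\rho$ exactly, and only here does one use that the image of $Y_i$ in $X/K$ has diameter at most $D$ (this uses $Y_i/H_i$ bounded, $H_i\leq K$, and finiteness of $\mathcal Q/G$ for uniformity), giving a cost at most $\frac{D}{2\rho}\dist[Z(Y_i)]{y_i}{x_{i+1}}$. Summing all terms yields $\dist[X/K]{\bar x}{\bar x'}\leq\lambda\dist[\bar X]{\bar x}{\bar x'}$ with $\lambda=\max\left\{1,\frac{\pi\sinh\rho}{2\rho},\frac{D}{2\rho}\right\}$, and together with the $2\rho$-quasi-surjectivity you already have, this gives the quasi-isometry by an elementary, self-contained argument.
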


\begin{proof}
	Recall that the embedding $X \to \dot X$ is $1$-Lipschitz.
	Hence it induces a $1$-Lipschitz embedding $X/K \to \bar X$.
	We claim that the map $X/K \to \bar X$ is actually bi-Lipschitz.
	For simplicity, we implicitly identify $X/K$ with its image in $\bar X$.
	Recall that $\mathcal Q/G$ is finite.
	It follows from our assumption that there exists $D \in \mathbb R_+$ such that for every $(H,Y) \in \mathcal Q$, the image of $Y$ in $X/K$ has diameter at most $D$.
	
	\medskip
	Let $\bar x, \bar x' \in X/K$.
	Let $\eta \in \mathbb R_+^*$.
	There exist $x,x' \in X$, respective pre-images of $\bar x$ and $\bar x'$, such that $\dist[\dot X] x{x'} < \dist[\bar X] {\bar x}{\bar x'} + \eta$.
	Following the construction of the metric on $\dot X$ -- see for instance \cite[Section~5.1]{Coulon:2014fr} -- we observe that there exists a sequence of points $(x_0,y_0,x_1, y_1, \dots, x_m,y_m)$ which approximates the distance between $x$ and $x'$ in the following sense:
	\begin{enumerate}
		\item $x_0  =x$ and $y_m = x'$;
		\item For every $i \in \{ 0, \dots, m-1\}$, there exists $(H_i,Y_i) \in \mathcal Q$ such that $y_i,x_{i+1} \in Y_i$;
		\item
		\begin{equation}
		\label{eqn: qi quotient spaces - approx dot X}
			\sum_{i = 0}^m \dist[X] {x_i}{y_i} + \sum_{i = 0}^{m-1} \dist[Z(Y_i)]{y_i}{x_{i+1}} < \dist[\dot X] x{x'} + \eta.
		\end{equation}
	\end{enumerate}
	For every $i \in \{0, \dots, m\}$, we write $\bar x_i$ and $\bar y_i$ for the images in $X/K$ of $x_i$ and $y_i$ respectively.
	It follows from the triangle inequality that
	\begin{equation}
	\label{eqn: qi quotient spaces - approx bar X}
		\dist[X/K]{\bar x}{\bar x'}
		\leq \sum_{i = 0}^m \dist[X/K] {\bar x_i}{\bar y_i} + \sum_{i = 0}^{m-1} \dist[X/K]{\bar y_i}{\bar x_{i+1}}
	\end{equation}
	We are going to compare the terms of the latter inequality with the ones of (\ref{eqn: qi quotient spaces - approx dot X}).
	Note first that for every $i \in \{0, \dots, m\}$, we have
	\begin{equation}
	\label{eqn: qi quotient spaces - easy comparison}
		\dist[X/K] {\bar x_i}{\bar y_i} \leq \dist[X] {x_i}{y_i}.
	\end{equation}
	Let $i \in \{0, \dots, m-1\}$.
	In order to estimate $\dist[X/K]{\bar y_i}{\bar x_{i+1}}$, we distinguish two cases.
	Assume first that $\dist[Y_i]{y_i}{x_{i+1}} \leq \pi \sinh \rho$.
	It follows from the definition of the metric on $Z(Y_i)$ that
	\begin{equation}
	\label{eqn: qi quotient spaces - hard comparison - case 1}
		\dist[X/K]{\bar y_i}{\bar x_{i+1}}
		\leq \dist[X]{y_i}{x_{i+1}}
		\leq \dist[Y_i]{y_i}{x_{i+1}}
		\leq \frac {\pi \sinh \rho}{2\rho} \dist[Z(Y_i)]{y_i}{x_{i+1}}.
	\end{equation}
	Assume now that $\dist[Y_i]{y_i}{x_{i+1}} > \pi \sinh \rho$.
	In particular $\dist[Z(Y_i)]{y_i}{x_{i+1}}  = 2 \rho$.
	Recall that the diameter of the image of $Y_i$ in $X/K$ is at most $D$.
	Hence
	\begin{equation}
	\label{eqn: qi quotient spaces - hard comparison - case 2}
		\dist[X/K]{\bar y_i}{\bar x_{i+1}} \leq \frac D{2\rho} \dist[Z(Y_i)]{y_i}{x_{i+1}}.
	\end{equation}
	Combining (\ref{eqn: qi quotient spaces - approx dot X}) - (\ref{eqn: qi quotient spaces - hard comparison - case 2}) we get that
	\begin{displaymath}
		\dist[X/K]{\bar x}{\bar x'}
		\leq \lambda \left(\dist[\dot X]x{x'} + \eta\right)
		\leq \lambda \left(\dist[\bar X]{\bar x}{\bar x'} + 2 \eta \right),
	\end{displaymath}
	where
	\begin{displaymath}
		\lambda = \max \left\{ 1, \frac {\pi \sinh\rho}{2\rho}, \frac D{2\rho} \right\}.
	\end{displaymath}
	The previous inequality holds for every $\eta \in \mathbb R_+^*$, hence $X/K \to \bar X$ is bi-Lipschitz, which completes the proof of our claim.
	Note that the diameter of the cones attached to $X$ to form the cone-off space $\dot X$ have diameter at most $2\rho$.
	Hence any point of $\bar X$ is a distance at most $2\rho$ from a point of $X/K$.
	Consequently the map $X/K \to \bar X$ is a quasi-isometry.
\end{proof}

\paragraph{Small cancellation theorem.}
The small cancellation theorem recalled bellow is a compilation of Proposition~6.7, Corollary~3.12, and Proposition~6.12 from \cite{Coulon:2014fr}

\begin{thm}
\label{res: small cancellation theorem}
	There exist positive constants $\delta_0, \delta_1$, $\Delta_0$ and $\rho_0$ satisfying the following property.
	Let $X$ be a $\delta$-hyperbolic length space and $G$ a group acting by isometries on $X$.
	Let $\mathcal Q$ be a $G$-invariant family of pairs $(H,Y)$ where $Y$ is a strongly quasi-convex subset of $X$ and $H$ a subgroup of $G$ stabilizing $Y$.
	We assume that $\mathcal Q/G$ is finite.
	Let $\rho \geq \rho_0$.
	If $\delta \leq \delta_0$, $\Delta(\mathcal Q,X) \leq \Delta_0$ and $\inj[X]{\mathcal Q} \geq 2 \pi \sinh \rho$ then the following holds.
	\begin{enumerate}
		\item The space $\bar X  = \bar X_\rho(\mathcal Q)$ is a $\delta_1$-hyperbolic length space.
		\item Let $(H,Y) \in \mathcal Q$.
		Let $a$ be the apex of $Z(Y)$ and $\bar a$ its image in $\bar X$.
		The projection $\pi \colon G \twoheadrightarrow \bar G$ induces an isomorphism from $\stab Y/H$ onto $\stab{\bar a}$.
		\item For every $x \in X$, the projection $\pi \colon G \to \bar G$ induces a bijection from the set $\set{g \in G}{\dist{gx}x \leq \rho/100}$ onto its image.
		\item Let $\bar F$ be an elliptic subgroup of $\bar G$.
		Either there exists an elliptic subgroup $F$ of $G$ such that the projection $\pi \colon G \to \bar G$ induces an isomorphism from $F$ onto $\bar F$, or there exists $(H,Y) \in \mathcal Q$ such that $\bar F$ is contained in $\stab{\bar a}$, where $\bar a$ stands for the image in $\bar X$ of the apex $a$ of the cone $Z(Y)$.
	\end{enumerate}
\end{thm}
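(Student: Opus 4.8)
The plan is to derive all four assertions from the geometric small-cancellation machinery of \cite{Coulon:2014fr}. After verifying that our hypotheses $\delta \leq \delta_0$, $\Delta(\mathcal Q, X) \leq \Delta_0$, $\inj[X]{\mathcal Q} \geq 2\pi\sinh\rho$ and $\rho \geq \rho_0$ are special cases of the running assumptions there, items (1)--(4) should become reformulations of (parts of) \cite[Corollary~3.12, Proposition~6.7 and Proposition~6.12]{Coulon:2014fr}; what follows is a description of how that machinery is built and where the real work sits. Everything revolves around the cone-off $\dot X = \dot X_\rho(\mathcal Q)$ and the action of $K$ on it. The first step is to show that $\dot X$ is hyperbolic, with a hyperbolicity constant that becomes uniformly bounded once $\delta$ is small relative to $\rho$: attaching the hyperbolic cones $Z(Y)$ of radius $\rho$ along the strongly quasi-convex subsets $Y$ fattens thin configurations in a controlled way, and the cone metric of Proposition~\ref{res: def distance cone}, together with strong quasi-convexity of the $Y$'s, is what lets one estimate Gromov products of points lying in or near distinct cones.

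Next I would feed the three small-cancellation inequalities into a hyperbolic Cartan--Hadamard (``windmill'') argument for the quotient $\bar X = \dot X/K$. The parameters are tuned so that the canonical projection $\zeta\colon \dot X \to \bar X$ is injective in restriction to every ball of a definite radius -- a fixed fraction of $\rho$ -- centered at a point of $X$; hence $\bar X$ is locally as hyperbolic as $\dot X$, and an induction on geodesic polygons upgrades this to a global hyperbolicity constant $\delta_1$ independent of $\delta$, of $X$ and of $\mathcal Q$. This yields item (1). The same local injectivity, plus the fact -- also coming out of the analysis of the $K$-action on $\dot X$ -- that $K$ acts with trivial point stabilizers on $X$, gives item (3): if $g,g'\in G$ move some $x\in X$ by at most $\rho/100$ and $\pi(g)=\pi(g')$, then $gx$ and $g'x$ lie in a ball on which $\zeta$ is injective and have the same image in $\bar X$, so $g^{-1}g'\in K$ fixes $x$ and must be trivial. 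Here the bound $\inj[X]{\mathcal Q}\geq 2\pi\sinh\rho$ is precisely what forbids a non-trivial element of a rotation subgroup $H$ from identifying points of $X$ (such an element displaces every point of $X$ by at least $\inj[X]{\mathcal Q}$), while $\delta\leq\delta_0$ and $\Delta(\mathcal Q,X)\leq\Delta_0$ handle the rest.

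For item (2), I would note that $g\in G$ fixes the apex $a$ of $Z(Y)$ in $\dot X$ if and only if $gY=Y$, so $\stab a=\stab Y$ and in particular $H\leq\ker(\pi|_{\stab Y})$; the bound $\Delta(\mathcal Q,X)\leq\Delta_0$ forbids translates of $Y$ by $K\setminus\stab Y$ to overlap $Y$ along a long piece, which forces $K\cap\stab Y=H$, so $\ker(\pi|_{\stab Y})=H$, and $\pi|_{\stab Y}$ surjects onto $\stab{\bar a}$ because the $K$-orbit of $a$ is the whole $\zeta$-preimage of $\bar a$. For item (4), an elliptic subgroup $\bar F\leq\bar G$ has a bounded orbit in the hyperbolic space $\bar X$; lifting a point that $\bar F$ moves little to $\dot X$, either the lift stays near $X$ -- and then local injectivity of $\zeta$ promotes $\bar F$ to an elliptic subgroup $F\leq G$ carried isomorphically onto $\bar F$ -- or the lift lies near some apex $\bar a$, and then $\bar F\leq\stab{\bar a}$.

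The main obstacle will be the uniform constant in item (1): showing that $\delta_1$ can be chosen independently of $\delta$, of $X$ and of $\mathcal Q$. This is exactly the hyperbolic Cartan--Hadamard theorem, and its proof -- a delicate analysis of how geodesic polygons of $\bar X$ run along the images of the cones, together with the windmill induction that bootstraps local hyperbolicity into global hyperbolicity -- is the technical core of \cite{Coulon:2014fr}. Once it, and the accompanying estimates on the $K$-action on $\dot X$, are in hand, items (2)--(4) follow by essentially formal arguments.
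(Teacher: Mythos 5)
This statement is not proved in the paper at all: it is explicitly recalled as a compilation of Proposition~6.7, Corollary~3.12 and Proposition~6.12 of \cite{Coulon:2014fr}, and your proposal likewise defers the substantive work (cone-off hyperbolicity, the Cartan--Hadamard/windmill argument, the analysis of the $K$-action) to that same machinery, sketching it consistently with how it is used here. So your approach is essentially the same as the paper's, namely an appeal to the cited small cancellation theory.
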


We are now in position to prove the following statement.

\begin{prop}
\label{SCQ}
Let $G$ be a group acting uniformly properly without inversion on a hyperbolic graph $\Gamma$ with uniformly bounded valence.
Let $\{g_1, \dots, g_m\}$ be a finite subset of $G$ whose elements are loxodromic (with respect to the action of $G$ on $\Gamma$) and pairwise non-commensurable.
In addition, we assume that for every $i\in\{1, \dots, m\}$, the group $\langle g_i \rangle$ is normal in $E(g_i)$.
Then for every finite subset $U\subseteq G$ there exists $N \in \mathbb N$ with the following property.
Let $n_1, \dots, n_m \in \mathbb N$, all bounded below by $N$.
Let $K$ be the normal closure of $\{g_1^{n_1}, \dots, g_m^{n_m}\}$ in $G$.
Then the quotient $\bar G = G/K$ belongs to $\P_0$. Moreover, we have the following.
	\begin{enumerate}
		\item \label{enu: sc quotient w/ bounded geometry - cone point stab}
		For every $i\in\{1, \dots, m\}$, the natural homomorphism $\pi \colon G \to \bar G$ induces an embedding of $E(g_i) / \langle g_i \rangle$ into $\bar G$.
		\item \label{enu: sc quotient w/ bounded geometry - local one-to-one}
		 The projection $\pi$ is injective when restricted to $U$.
		\item \label{enu: sc quotient w/ bounded geometry - elliptic}
		Let $\bar F$ be a finite subgroup of $\bar G$. Then either there exists a finite subgroup $F$ of $G$ such that $\pi (F)=\bar F$ or $\bar F$ is conjugate to a subgroup of $\pi(E(g_i))$ for some $i\in\{1, \dots, m\}$.
	\end{enumerate}
\end{prop}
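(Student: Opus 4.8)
The plan is to apply the small cancellation machinery of Theorem~\ref{res: small cancellation theorem} to a suitable rescaling of the action of $G$ on $\Gamma$, taking for $\mathcal Q$ the $G$-orbits of the pairs $(\langle g_i\rangle, Y_{g_i})$. First I would fix the cylinders $Y_{g_i}$ (which exist and are strongly quasi-convex by the results recalled in Section~\ref{sec: hyp geom}) and set $\mathcal Q_0 = \{(\langle g_i\rangle, Y_{g_i}) : 1\le i\le m\}$, then let $\mathcal Q$ be the closure of $\mathcal Q_0$ under the $G$-action; since the $g_i$ are pairwise non-commensurable and $\langle g_i\rangle$ is normal in $E(g_i)$, the stabilizer of the pair $(\langle g_i\rangle, Y_{g_i})$ contains $E(g_i)$, so $\mathcal Q/G$ is finite. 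Since the action of $G$ on the graph $\Gamma$ may not be on a hyperbolic \emph{length} space with small hyperbolicity constant, I would first pass to an appropriate geometric realization and then rescale the metric by a large factor $\kappa$: this sends $\delta \mapsto \delta/\kappa$ and $\Delta(\mathcal Q_0, X)\mapsto \Delta(\mathcal Q_0,X)/\kappa$, so by choosing $\kappa$ large the hypotheses $\delta\le\delta_0$ and $\Delta(\mathcal Q,X)\le\Delta_0$ of Theorem~\ref{res: small cancellation theorem} are met. Here I would invoke Lemma~\ref{res: fellow travel conjugate}: because the $g_i$ are pairwise non-commensurable, the overlap $\diam(Y_g^{+5\delta}\cap uY_{g'}^{+5\delta})$ is bounded by a constant $\Delta$ over all distinct pairs in $\mathcal Q$, so after rescaling $\Delta(\mathcal Q,X)$ is as small as we like; this is the key input that makes the small cancellation condition hold for \emph{all} translates, not just the finitely many base pairs.

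Next I would choose $\rho\ge\rho_0$ and then choose $N$ so that for all $n_i\ge N$ we have $\inj[X]{\mathcal Q}\ge 2\pi\sinh\rho$; since $\mathcal Q$ consists of the groups $\langle g_i^{n_i}\rangle$ and $\|g_i^{n_i}\|_X = n_i\|g_i\|_X \cdot$ (rescaling factor) grows linearly in $n_i$ while every nontrivial power of $g_i^{n_i}$ has translation length at least $\|g_i^{n_i}\|_X$, the injectivity radius tends to infinity with $\min_i n_i$, so a finite threshold $N$ suffices. Note $N$ depends on $U$ only through the requirement for item~\ref{enu: sc quotient w/ bounded geometry - local one-to-one}: I would enlarge $\rho$ (and hence, possibly, $N$) so that $\rho/100$ exceeds $\max_{u\in U}\dist{ux_0}{x_0}$ for a basepoint $x_0$, so that part~(3) of Theorem~\ref{res: small cancellation theorem} gives injectivity of $\pi$ on $U$. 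With these choices, $K$ is exactly the normal closure of $\{g_1^{n_1},\dots,g_m^{n_m}\}$ (the subgroup generated by all the $H$'s in $\mathcal Q$), and $\bar X = \bar X_\rho(\mathcal Q)$ is $\delta_1$-hyperbolic by part~(1).

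Now I would establish membership in $\P_0$ and the three enumerated properties. For item~\ref{enu: sc quotient w/ bounded geometry - cone point stab}: part~(2) of Theorem~\ref{res: small cancellation theorem} identifies $\stab{\bar a}$ with $\stab Y/H = E(g_i)/\langle g_i^{n_i}\rangle$ for the pair over $g_i$, but since we want $E(g_i)/\langle g_i\rangle$ to embed, I would instead take $H = \langle g_i\rangle$... wait — here one must be careful: the small cancellation relation being imposed is $g_i^{n_i}=1$, so $H$ should be $\langle g_i^{n_i}\rangle$, and then $\pi(E(g_i))/\pi(\langle g_i\rangle) = E(g_i)/\langle g_i\rangle$ does embed because $\ker\pi\cap E(g_i) = \langle g_i^{n_i}\rangle$ is contained in $\langle g_i\rangle$, which follows from part~(2) applied with $H=\langle g_i^{n_i}\rangle$ together with the normality of $\langle g_i\rangle$ in $E(g_i)$. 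For item~\ref{enu: sc quotient w/ bounded geometry - elliptic}: this is immediate from part~(4), since a finite subgroup of $\bar G$ is elliptic, and the cone-point stabilizer $\stab{\bar a}$ for a translate $g(\langle g_i^{n_i}\rangle, Y_{g_i})$ is $\pi(gE(g_i)g^{-1}) = \pi(g)\pi(E(g_i))\pi(g)^{-1}$, giving the conjugacy statement. Finally, to see $\bar G\in\P_0$ I would verify the hypotheses of Proposition~\ref{res: characterization P0}(\ref{enu: characterization P0 - action w/o inversion}) or argue directly: $\bar G$ acts on $\bar X$, which is a hyperbolic length space; by Proposition~\ref{res: qi quotient spaces} it is quasi-isometric to $\Gamma/K$, and one checks that $\bar X$ has bounded geometry because $\Gamma$ has uniformly bounded valence and the added cones are uniformly locally finite after a further subdivision — then invoke the characterization. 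The main obstacle I expect is the bounded-geometry verification for $\bar X$: one must control the local structure near the apices and confirm that the quotient by $K$ does not create unboundedly many short loops at a vertex; this requires combining the quasi-isometry of Proposition~\ref{res: qi quotient spaces} with the uniform properness of the $\bar G$-action (itself a consequence of part~(3) of Theorem~\ref{res: small cancellation theorem} applied at every basepoint, plus acylindricity) to produce a uniform bound on vertex stabilizers and vertex degrees in a graph model of $\bar X$.
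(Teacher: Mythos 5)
The bulk of your plan coincides with the paper's proof: you rescale the graph so that $\delta \leq \delta_0$ and, via Lemma~\ref{res: fellow travel conjugate}, so that the overlap constant is at most $\Delta_0$; you take $\mathcal Q$ to be the $G$-orbit of the pairs $\left(\langle g_i^{n_i}\rangle, Y_{g_i}\right)$ (your mid-proof correction from $\langle g_i\rangle$ to $\langle g_i^{n_i}\rangle$ is essential -- with $H=\langle g_i\rangle$ the injectivity radius could never reach $2\pi\sinh\rho$); you choose $N$ so that the translation lengths of the $g_i^{n_i}$ exceed $2\pi\sinh\rho$ (the paper is equally brief here; strictly one should argue via stable translation length, since $\norm{g^n}=n\norm g$ is not literally an identity); and you read items~(\ref{enu: sc quotient w/ bounded geometry - cone point stab})--(\ref{enu: sc quotient w/ bounded geometry - elliptic}) off parts (2)--(4) of Theorem~\ref{res: small cancellation theorem}, handling the set $U$ through part (3). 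Arranging $\rho/100 \geq \max_{u\in U} d(ux_0,x_0)$ by enlarging $\rho$ instead of by rescaling, as the paper does, is a harmless variant.

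The genuine gap is in the claim $\bar G\in\P_0$. You propose to verify that $\bar X$ itself has bounded geometry and that $\bar G$ acts uniformly properly on it, and you explicitly leave this as ``the main obstacle'': as written (``one checks\dots'', ``after a further subdivision'') it is not a proof, and it is the one point where real work would be needed -- part (3) of Theorem~\ref{res: small cancellation theorem} only controls elements almost fixing points coming from $X$, not points inside the attached cones, so uniform properness and capacity bounds near the apices do not follow from what you cite. The paper sidesteps this entirely: since the action is without inversion, $\bar\Gamma=\Gamma/K$ is again a graph on which $\bar G$ acts without inversion, and its vertices trivially inherit the bounds from $\Gamma$ (valence at most $d$, and the stabilizer of $Kv$ in $\bar G$ is $\pi(\stab v)$, of cardinality at most $M$); so by Proposition~\ref{res: characterization P0} the only remaining issue is hyperbolicity of $\bar\Gamma$, which follows from the chain you already have in hand: $\bar X$ is hyperbolic by Theorem~\ref{res: small cancellation theorem}, $\bar X$ is quasi-isometric to $X/K$ by Proposition~\ref{res: qi quotient spaces} (each $Y/H$ is bounded), and $X/K$ is a rescaled copy of $\bar\Gamma$. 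Redirect your conclusion from the cone-off quotient $\bar X$ to the quotient graph $\bar\Gamma$ and the bounded-geometry verification you were worried about becomes unnecessary.
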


\begin{proof}
	The constant $\delta_0$ $\delta_1$, $\Delta_0$, and $\rho_0$ are the one given by Theorem~\ref{res: small cancellation theorem}.
	We choose an arbitrary $\rho \geq \rho_0$.
	We write $\delta$ for the hyperbolicity constant of $\Gamma$.
	According to Lemma~\ref{res: fellow travel conjugate} there exists a constant $\Delta$ such that for every $u \in G$, for every $i \neq j$ in $\{1, \dots, m\}$, if
	\begin{displaymath}
		\diam\left(Y_{g_i}^{+5\delta}\cap uY_{g_j}^{+5\delta}\right) > \Delta,
	\end{displaymath}
	then $i = j$ and $u$ belongs to $E(g_i)$.
	Up to replacing $\Gamma$ by a rescaled version of $\Gamma$, that we denote $X$, we may assume that the following holds
	\begin{itemize}
		\item $\delta \leq \delta_0$ and $\Delta \leq \Delta_0$,
		\item there exists $x \in X$, such that for every $u \in U$ we have $\dist[X]{ux}x \leq \rho /100$.
	\end{itemize}
	Since the $g_i$'s are loxodromic, there exists $N \in \mathbb N$ such that for every $n \geq N$, for every $i \in \{1, \dots, m\}$, we have $\norm[X]{g_i^n} \geq 2\pi \sinh \rho$.
	Let $n_1, \dots, n_m \in \mathbb N$, all bounded below by $N$.
	Let $K$ be the normal closure of $\{g_1^{n_1}, \dots, g_m^{n_m}\}$ and $\bar G$ be the quotient $\bar G = G/K$.
	
	\medskip
	Since $G$ acts without inversion on $\Gamma$, the quotient of $\bar \Gamma = \Gamma /K$ is a graph endowed with an action without inversion of $\bar G$.
	According to our assumptions there exist $d, M \in \mathbb N$ such that given any vertex $v$ of $\Gamma$, its valence is at most $d$ and the cardinality of its stabilizer is bounded above by $M$.
	Observe that the same holds for the vertices of $\bar \Gamma$.
	To prove that $\bar G$ belongs to $\mathcal P_0$, it suffices to show that $\bar \Gamma$ is hyperbolic.
	To that end, we use small cancellation theory.
	Let $\mathcal Q$ be the following collection
	\begin{equation*}
	\mathcal Q = \set{\left(\langle ug_i^{n_i}u^{-1}\rangle , uY_g\right)}{u \in G,\ 1 \leq i \leq m}.
	\end{equation*}
	By construction $\Delta(\mathcal Q,X) \leq \Delta_0$ and $\inj[X]{\mathcal Q} \geq 2\pi \sinh \rho$.
	The cone-off space $\dot X = \dot X_\rho(\mathcal Q)$ and the quotient $\bar X = \dot X/K$ are built as above.
	The parameters have been chosen in such a way so that the family $\mathcal Q$ satisfies the assumptions of Theorem~\ref{res: small cancellation theorem}.
	It follows that $\bar X$ is a hyperbolic length space.
	Note that for every $(H,Y) \in \mathcal Q$, the quotient $Y/H$ is bounded, hence $\bar X$ is quasi-isometric to $X/K$ (Proposition~\ref{res: qi quotient spaces}).
	Nevertheless $X/K$ is just a rescaled copy of $\bar \Gamma$.
	Thus $\bar \Gamma$ is quasi-isometric to $\bar X$, and therefore hyperbolic.
	Points~(\ref{enu: sc quotient w/ bounded geometry - cone point stab})-(\ref{enu: sc quotient w/ bounded geometry - elliptic}) directly follows from Theorem~\ref{res: small cancellation theorem}.
\end{proof}

\section{Proof of the main theorem}

We begin with an auxiliary result, which is similar to \cite[Proposition 4.2]{MM}.
\begin{lem}
\label{res: rips}
Let $Q$ be a finitely presented infinite simple group and let $H$ be a torsion-free hyperbolic group splitting as
\begin{displaymath}
	1\to N\to H\to Q\to 1,
\end{displaymath}
where the subgroup $N$ is finitely generated. Let $a\in N\setminus \{ 1\}$. Then there exists $b \in N\setminus\{1\}$ such that $a$ and $b$ are not commensurable in $N$ but are conjugate in every finite quotient of $N$.
\end{lem}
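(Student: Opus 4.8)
\medskip

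\noindent The plan is to realise $b$ as an $H$-conjugate $hah^{-1}$ of $a$ by a suitably chosen element $h\in H\setminus N$: the simplicity and infiniteness of $Q$ will let me push this $H$-conjugacy down to every finite quotient of $N$, while the torsion-free hyperbolicity of $H$ will obstruct commensurability of $a$ and $b$ inside $N$. Here is the choice of $b$. Since $H$ is torsion-free, so is $N$; hence $a$ has infinite order and is a loxodromic element of $H$. Let $E_H(a)$ be the maximal elementary subgroup of $H$ containing $a$; being virtually cyclic and torsion-free it is infinite cyclic, say $E_H(a)=\langle a_0\rangle$ with $a=a_0^{j}$ for some $j\in\mathbb Z\setminus\{0\}$. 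The image $a_0N\in Q$ satisfies $(a_0N)^{j}=aN=1$, so $\langle a_0N\rangle$ is a finite, hence proper, subgroup of the infinite group $Q$. I would then pick $q\in Q\setminus\langle a_0N\rangle$, lift it to $h\in H$, and set $b=hah^{-1}$. Then $b\in N$ because $N\trianglelefteq H$, and $b\neq 1$.

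\smallskip

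\noindent\emph{Non-commensurability in $N$.} Suppose $ua^{k}u^{-1}=b^{l}$ for some $u\in N$ and $k,l\in\mathbb Z\setminus\{0\}$. Since $b^{l}=ha^{l}h^{-1}$, the element $g=h^{-1}u$ conjugates $a^{k}$ to $a^{l}$. As $a^{k}$ and $a^{l}$ are nontrivial powers of a loxodromic element, each fixes the same pair of points of $\partial H$ as $a$, so $g$ preserves that pair and therefore $g\in E_H(a)=\langle a_0\rangle$. The image of $g=h^{-1}u$ in $Q$ then lies in $\langle a_0N\rangle$; since $u\in N$ this image equals $h^{-1}N$, so $hN\in\langle a_0N\rangle$, contradicting the choice of $h$. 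Thus $a$ and $b$ are not commensurable in $N$.

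\smallskip

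\noindent\emph{Conjugacy in finite quotients.} Let $N/M_{0}$ be an arbitrary finite quotient of $N$. Because $N$ is finitely generated it has only finitely many subgroups of index $[N:M_{0}]$, so $M=\bigcap_{h\in H}hM_{0}h^{-1}$ is a finite intersection; it is normal in $H$, contained in $M_{0}$, and of finite index in $N$. It suffices to conjugate $aM$ to $bM$ inside $N/M$. Write $\bar H=H/M$ and $\bar N=N/M$, so $\bar N$ is a finite normal subgroup of $\bar H$ with $\bar H/\bar N\cong Q$. The claim I would establish is that two elements of $\bar N$ conjugate in $\bar H$ are already conjugate in $\bar N$. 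Indeed, let $C=C_{\bar H}(\bar N)$, a normal subgroup of $\bar H$; then $C\bar N/\bar N$ is a normal subgroup of the simple group $Q$. If it were trivial, then $C\subseteq\bar N$, hence $C=Z(\bar N)$, and the conjugation action would embed $\bar H/Z(\bar N)$ into the finite group $\operatorname{Aut}(\bar N)$, forcing its quotient $Q$ to be finite, a contradiction. So $C\bar N=\bar H$, which means every element of $\bar H$ equals $c\bar n$ with $c\in C$ and $\bar n\in\bar N$, and such an element conjugates $\bar N$ exactly as $\bar n$ does. Applying this to $aM$ and $bM=(hM)(aM)(hM)^{-1}$ yields $v\in N$ with $vav^{-1}\equiv b\pmod M$, a fortiori modulo $M_{0}$; hence $a$ and $b$ are conjugate in $N/M_{0}$.

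\smallskip

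\noindent\emph{Main obstacle.} The first two steps are routine bookkeeping with elementary subgroups of a torsion-free hyperbolic group. The heart of the proof is the last step: replacing $M_{0}$ by the $H$-invariant subgroup $M$ (which is where finite generation of $N$ is used) and the conjugacy-transfer from $\bar H$ down to $\bar N$, which is exactly the point where the hypotheses that $Q$ is infinite and simple enter.
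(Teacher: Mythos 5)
Your proof is correct and follows essentially the same approach as the paper: take $b$ to be an $H$-conjugate of $a$ by an element whose image in $Q$ lies outside the (finite, hence proper) image of the maximal cyclic subgroup containing $a$, then use finite generation of $N$ to replace the given finite-index kernel by an $H$-invariant one and the infinite simplicity of $Q$ to transfer $H$-conjugacy down to $N$-conjugacy in the finite quotient. The only differences are in implementation: you exclude commensurability via the fixed-point pair on $\partial H$ and the elementary subgroup $E_H(a)$ instead of the paper's uniqueness-of-roots argument, and you transfer conjugacy via the identity $C_{\bar H}(\bar N)\bar N=\bar H$ rather than via the paper's induced action of $Q$ on the finite set of conjugacy classes of $N/K$ — both are valid and of comparable length.
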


\begin{proof}
Let $C=\langle c\rangle $ be the maximal cyclic subgroup of $H$ containing $a$ and let $h\in H\setminus CN$ (note that our assumptions imply that $CN\ne H$). Let $b=h^{-1}ah$ and $a=c^n$ for some $n\in \mathbb Z\setminus \{ 0\}$.

If $a$ and $b$ are commensurable in $N$, then there exist $t\in N$ and $k, \ell\in \mathbb Z\setminus \{ 0\}$ such that $c^{kn}=t^{-1}h^{-1}c^{\ell n}ht$. 
Since $H$ is torsion-free we have $k=\ell$ and by the uniqueness of roots in a torsion-free hyperbolic group we obtain $c=t^{-1}h^{-1}cht$. 
It follows that $ht\in C$ and consequently $h\in CN$, which contradicts our assumption. 
Thus $a$ and $b$ are not commensurable in $N$. 

Assume now that there exists a finite index normal subgroup $K$ of $N$ such that the images of $a$ and $b$ are not conjugate in $N/K$. 
Since $N$ is finitely generated, there are only finitely many subgroups of any finite index in $N$. 
Replacing $K$ with the intersection of all subgroups of $N$ of index $[N:K]$ if necessary, we can assume that $K$ is normal in $H$. 
The natural action of the group $H$ on the finite set $\Omega$ of conjugacy classes of $N/K$ is non-trivial; indeed, the element $h$ acts non-trivially as the images of $a$ and $b$ are not conjugate in $N/K$. 
Since every element of $N$ acts on $\Omega $ trivially, the action of $H$ on $\Omega$ gives rise to a non-trivial homomorphism $\epsilon\colon Q\to Sym(\Omega)$, which contradicts the assumption that $Q$ is infinite simple.
\end{proof}

\begin{thm}\label{thm-main}
There exists a finitely generated group $G\in \P_0$ such that every amenable quotient of $G$ is trivial.
\end{thm}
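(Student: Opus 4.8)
\emph{Strategy.} The plan is to produce $G$ as a generic small cancellation quotient of a finitely generated group $N$ which (i) lies in $\P_0$, (ii) has Kazhdan's property (T), and (iii) is the kernel of a short exact sequence $1\to N\to H\to Q\to 1$ with $H$ a torsion-free hyperbolic group and $Q$ an infinite simple group. Inside $N$ we will kill, for each generator, a relation forcing its image to be trivial in every finite quotient; this makes $G$ have no nontrivial finite quotient, and property (T), inherited by $G$ from $N$, will then force every amenable quotient of $G$ to be trivial as well.

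\emph{Set-up.} Fix a finitely presented infinite simple group $Q$ with property (T); such groups are known to exist. Applying the variant of the Rips construction of \cite{BO} to $Q$ produces a torsion-free hyperbolic group $H$ and a short exact sequence $1\to N\to H\to Q\to 1$ with $N$ finitely generated; since $Q$ has property (T), one may arrange in addition that $N$ (and hence $H$) has property (T). Now $H$ acts freely on the vertex set of its Cayley graph $\Gamma$ with respect to a finite generating set, $\Gamma$ is a hyperbolic graph of uniformly bounded valence, and, $H$ being torsion-free, the action is without inversion; restricting to $N$ and using Proposition~\ref{res: characterization P0} we get $N\in\P_0$, with $N$ acting uniformly properly and without inversion on $\Gamma$. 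Since $H$ is torsion-free every nontrivial element of $N$ has infinite order, hence acts loxodromically on $\Gamma$, and for such an element $g$ the maximal elementary subgroup for the $N$-action is $E_H(g)\cap N$, which is infinite cyclic, so $\langle g\rangle$ is automatically normal in it. Thus $N$, together with any finite family of nontrivial elements, satisfies all the hypotheses of Proposition~\ref{SCQ} except possibly pairwise non-commensurability.

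\emph{Construction.} Write $N=\langle a_1,\dots,a_k\rangle$ with each $a_i\neq 1$. For each $i$, Lemma~\ref{res: rips} applied to $a_i$ yields $b_i\in N\setminus\{1\}$ such that $a_i$ and $b_i$ are non-commensurable in $N$ but conjugate in every finite quotient of $N$. Since $N$ is an infinite finitely generated normal subgroup of a non-elementary hyperbolic group, it contains a nonabelian free subgroup, and after modifying the generating set by Nielsen transformations and making generic choices in the proof of Lemma~\ref{res: rips} we may assume the $2k$ elements $a_1,b_1,\dots,a_k,b_k$ are pairwise non-commensurable in $N$. Applying Proposition~\ref{SCQ} to the action of $N$ on $\Gamma$, to these $2k$ loxodromic elements, and to the finite set $U=\{1,a_1,\dots,a_k\}$, we obtain an integer $M$ such that, choosing for each $i$ integers $p_i,q_i\geq M$ with $\gcd(p_i,q_i)=1$, the group
\begin{displaymath}
	G=N/\ll a_1^{p_1}, b_1^{q_1},\dots,a_k^{p_k}, b_k^{q_k}\rr
\end{displaymath}
belongs to $\P_0$ and the projection $\pi\colon N\to G$ is injective on $U$. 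In particular $G$ is finitely generated, and it is nontrivial since $\pi(a_1)\neq 1$.

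\emph{Conclusion, and the main obstacle.} As $G$ is a quotient of $N$ and property (T) passes to quotients, $G$ has property (T). Next, $G$ has no nontrivial finite quotient: if $F$ is one, then $F$ is a finite quotient of $N$, so by Lemma~\ref{res: rips} the images of $a_i$ and $b_i$ in $F$ are conjugate, hence of equal order; this common order divides both $p_i$ and $q_i$ — because $a_i^{p_i}$ and $b_i^{q_i}$ are trivial in $G$, hence in $F$ — and so divides $\gcd(p_i,q_i)=1$; thus the image of $a_i$ in $F$ is trivial, and since the $a_i$ generate $N$ we get $F=\{1\}$. Finally, any amenable quotient of $G$ has property (T), is therefore finite, and is therefore trivial. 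This proves the theorem. The essential point — and the only place property (T) is genuinely used — is the appeal to \cite{BO} to obtain a Rips kernel $N$ with property (T); without it the argument still shows that the images of the $a_i$ are trivial in every finite quotient of $G$, i.e.\ that $G$ is non-residually finite, but not that $G$ has no nontrivial amenable quotient. The remaining delicate point is the arrangement of the $a_i,b_i$ into a pairwise non-commensurable family, which, although routine, uses genuinely that $N$ contains free subgroups.
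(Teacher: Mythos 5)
Your overall architecture is the same as the paper's (Rips construction from \cite{BO}, Lemma~\ref{res: rips} to produce pairs that are conjugate in every finite quotient, Proposition~\ref{SCQ} to pass to the small cancellation quotient, and property (T) to upgrade ``no finite quotients'' to ``no amenable quotients''), but two steps are not justified as written. First, the property (T) step: you take $Q$ to be a finitely presented infinite simple group \emph{with property (T)} and write ``since $Q$ has property (T), one may arrange in addition that $N$ (and hence $H$) has property (T).'' This is a non sequitur: Kazhdan's property does not pass from the quotient $Q=H/N$ to the kernel $N$, and nothing in the Rips construction transfers a property of $Q$ to $N$. It also smuggles in a heavy and unnecessary existence assumption (finitely presented infinite simple Kazhdan groups). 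The actual mechanism of \cite[Corollary 1.2]{BO}, and of the paper, is the opposite one: the kernel $N$ is arranged to be a \emph{quotient} of a pre-chosen torsion-free hyperbolic group, and choosing that seed group to have (T) (property (T) passes to quotients) is what gives $N$, hence $G$, property (T); $Q$ may be an arbitrary finitely presented infinite simple group. As written, the only place where (T) enters your argument is unsupported.

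Second, the non-commensurability step. Proposition~\ref{SCQ} requires the whole family $a_1,b_1,\dots,a_k,b_k$ to be pairwise non-commensurable, and you dispose of this with ``after modifying the generating set by Nielsen transformations and making generic choices \dots we may assume'' it -- no argument is given. Lemma~\ref{res: rips} only controls each single pair $(a_i,b_i)$; since each $b_i$ is an $H$-conjugate of $a_i$, what you would really need is a generating set of $N$ whose elements are pairwise non-commensurable \emph{in $H$} (not merely in $N$), and producing one is not obviously routine for a finitely generated normal subgroup of a hyperbolic group. This is precisely the difficulty the paper's proof is engineered to avoid: there $H_0$ is taken to be a common non-cyclic torsion-free hyperbolic quotient (via \cite{Ols93}) of a Kazhdan group $H_1$ and of the small cancellation group $H_2$, which is generated by conjugates of the single element $x$; consequently $N$ is generated by conjugates of a single element $a$, one application of Lemma~\ref{res: rips} yields a single non-commensurable pair $(a,b)$, and killing $a^p,b^q$ for distinct primes already forces every finite (hence, by (T), every amenable) quotient to be trivial. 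If you wish to keep the multi-generator route, you must actually prove the simultaneous non-commensurability claim; together with the (T) issue above, this leaves a genuine gap in the proposal, although both can be repaired -- most economically by adopting the single-element trick through $H_2$ and sourcing property (T) from the seed group rather than from $Q$.
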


\begin{proof}
Let $H_1$ be a torsion-free hyperbolic group with property (T) of Kazhdan and 
\begin{equation*}
	H_2=\left\langle x,y\mid y=x\left(y^{-1}xy\right)x^2\left(y^{-1}xy\right) \cdots x^{10} \left(y^{-1}xy\right)\right\rangle.
\end{equation*}
It is easy to see that $H_2$ satisfies the $C^\prime (1/6)$ small cancellation condition and hence is hyperbolic. 
Moreover it is generated by some conjugates of $x$.
Any two non-cyclic torsion-free hyperbolic groups have a common non-cyclic torsion-free hyperbolic quotient group \cite[Theorem~2]{Ols93}. 
Let $H_0$ denote a common non-cyclic torsion-free hyperbolic quotient of $H_1$ and $H_2$.

By \cite[Corollary 1.2]{BO}, there exists a short exact sequence
\begin{displaymath}
	1\to N\to H\to Q\to 1
\end{displaymath}
such that $H$ is torsion-free hyperbolic, $N$ is a quotient of $H_0$, and $Q$ is a finitely presented infinite simple group. 
Clearly $N$ inherits property (T) from $H_1$. 
As a subgroup of a hyperbolic group, $N$ belongs to the class $\mathcal P_0$.
Let $a$ denote the image of $x\in H_2$ in $N$. 
Since $N$ is a quotient group of $H_2$, it is generated by conjugates of $a$ (in $N$). 

According to the previous lemma, there exists $b\in N$ such that $a$ and $b$ are not commensurable in $N$ but are conjugate in every finite quotient of $N$. 
By Proposition \ref{SCQ}, there exist distinct primes $p$ and $q$ such that $G=N/\normal{a^p, b^q}$ belongs to $\mathcal P_0$, and the images of $a$ and $b$ in $G$ have orders $p$ and $q$, respectively.

Let $A$ be an amenable quotient of $G$. 
Being a quotient group of $N$, $A$ has property (T) and, therefore, is finite. 
It follows that the images of $a$ and $b$ in $A$, denoted by $\bar a$ and $\bar b$, are conjugate. 
As $\bar a^p=\bar b ^q=1$ and $gcd(p,q)=1$, we have $\bar a= \bar b =1$. 
Since $N$ is generated by conjugates of $a$, $A$ is generated by conjugates of $\bar a$, which implies $A=\{ 1\}$.
\end{proof}

\noindent
\emph{R\'emi Coulon} \\
Univ Rennes, CNRS \\
IRMAR - UMR 6625 \\
 F-35000 Rennes, France\\
\texttt{remi.coulon@univ-rennes1.fr} \\
\texttt{http://rcoulon.perso.math.cnrs.fr}

\bigskip

\noindent
\emph{Denis Osin} \\
Department of Mathematics \\
Vanderbilt University \\
Nashville, TN 37240, U.S.A.\\
\texttt{denis.v.osin@vanderbilt.edu} \\
\texttt{https://as.vanderbilt.edu/math/bio/denis-osin}

\end{document}